\newcommand{\ds}{\displaystyle}
\newtheorem{theorem}{Theorem}
\newtheorem{corollary}[theorem]{Corollary}
\newtheorem{thm}{Theorem}[section]
\newtheorem{lemma}[thm]{Lemma}
\theoremstyle{definition}
\theoremstyle{remark}
\def\g{\gamma}
\def\l{\lambda}
\def\p{\partial}
\def\E{\mbox{\rm e}}
\def\a{\alpha}
\def\Odr{\mathcal{O}}
\def\di{\,\mathrm{d}}
\def\iu{\mathrm{i}}
\def\la{\langle}
\def\ra{\rangle}
\DeclareMathOperator{\RE}{Re}
\DeclareMathOperator{\IM}{Im}
\numberwithin{equation}{section}
\def\NB{\marginpar{\textcolor{red}{$\blacklozenge$}}}
\title[Eigenvalue asymptotics for the damped wave equation]
{Eigenvalue asymptotics, inverse problems and a trace formula for
the linear damped wave equation}
\author{Denis Borisov \and Pedro Freitas}
\address{
Department of Physics and Mathematics, Bashkir State Pedagogical
University, October rev. st., 3a, 450000, Ufa, Russia
}\email{borisovdi@yandex.ru}
\address{Department of Mathematics, Faculdade de Motricidade Humana (TU Lisbon) {\rm and}
Group of Mathematical Physics of the University of Lisbon\\
Complexo Interdisciplinar, Av.~Prof.~Gama Pinto~2\\ P-1649-003
Lisboa, Portugal}\email{freitas@cii.fc.ul.pt}
\date{\today}
\subjclass[2000]{Primary 35P15; Secondary 35J05}
\thanks{
D.B. was partially supported by RFBR (07-01-00037) and
gratefully acknowledges the support from Deligne 2004 Balzan
prize in mathematics. D.B. is also supported by the grant of the
President of Russia for young scientist and their supervisors
(MK-964.2008.1) and by the grant of the President of Russia for
leading scientific schools (NSh-2215.2008.1) P.F. was partially supported by FCT/POCTI/FEDER.
 }
\begin{document}

\allowdisplaybreaks

\begin{abstract}
We determine the general form of the asymptotics for Dirichlet
eigenvalues of the one--dimensional linear damped wave operator. As a
consequence, we obtain that given a spectrum corresponding to
a constant damping term this determines the damping term
in a unique fashion. We also derive a trace formula for this
problem.
\end{abstract}
%
%%%%%%%%%%%%%%%%%%%%%%%%%%%% Definition of title page
%%%%%%%%%%%%%%%%%%%%%%%%%%
%

\maketitle

\section{Introduction}

Consider the one--dimensional linear damped wave equation on the
interval $(0,1)$, that is,

\begin{equation}\label{waveeq}
\left\{
\begin{aligned}
&w_{tt}+2a(x)w_{t}=w_{xx}+b(x)w, && x\in(0,1),\; t>0
\\
&w(0,t) = w(1,t)=0, && t>0
\\
&w(x,0) = w_{0}(x), \;\;\; w_{t}(x,0) = w_{1}(x), && x\in(0,1)
\end{aligned}
\right.
\end{equation}

The eigenvalue problem associated with~(\ref{waveeq}) is given by
\begin{align}
&u_{xx}-(\lambda^{2}+2\lambda a-b)u=0,\quad
x\in(0,1),\label{waveeig1}
\\
&u(0) = u(1)=0,\label{waveeig2}
\end{align}
and has received quite a lot of attention in the literature since
the papers of Chen et al.~\cite{chen} and Cox and Zuazua~\cite{cox}.
In the first of these papers the authors derived formally an
expression for the asymptotic behaviour of the eigenvalues
of~(\ref{waveeig1}),~(\ref{waveeig2}) in the case of a zero
potential $b$, which was later proved rigorously in the second of
the above papers. Following this, there were several papers on the
subject which, among other things, extended the results to
non--vanishing $b$~\cite{bera}, and showed that it is possible to
design damping terms which make the spectral abscissa as large as
desired~\cite{caco}. In~\cite{frei2} the second author of the
present paper addressed the inverse problem in arbitrary dimension
giving necessary conditions for a sequence to be the spectrum of an
operator of this type in the weakly damped case. As far as we are
aware, these are the only results for the inverse problem associated
with~(\ref{waveeig1}),~(\ref{waveeig2}). Other results for the
$n-$dimensional problem include, for instance, the fact that in that
case the decay rate is no longer determined solely by the
spectrum~\cite{lebe}, a study of some particular cases where the
role of geometric optics is considered~\cite{asle}, the asymptotic
behaviour of the spectrum~\cite{sjos} and the study of
sign--changing damping terms~\cite{frei1}.

The purpose of the present paper is twofold. On the one hand, we
show that problem~(\ref{waveeig1}),~(\ref{waveeig2}) may be
addressed in the same way as the classical Sturm--Liouville
problem in the sense that, although this is not a self--adjoinf
problem, the methods used for the former problem may be applied
here with similar results. This idea was already present in
both~\cite{chen} and~\cite{cox}. Here we take further advantage
of this fact to obtain the full asymptotic expansion for the
eigenvalues of~(\ref{waveeig1}),~(\ref{waveeig2})
(Theorem~\ref{thmasympt}). Based on these similarities, we were
also led to a  (regularized) trace formula in the spirit of that
for the Sturm--Liouville problem (Theorem~\ref{th1.4}).

On the other hand, the idea behind obtaining further terms in the
asyptotics was to use this information to address the associated
inverse spectral problem of finding all damping terms that give a
certain spectrum. Our main result along these lines is to show that
in the case of constant damping there is no other smooth damping
term yielding the same spectrum (Corollary~\ref{th1.2}). Namely, we
obtain the criterion for the damping term to be constant. Note that
this is in contrast with the inverse (Dirichlet) Sturm--Liouville
problem, where for each admissible spectrum there will exist a
continuum of potentials giving the same spectrum~\cite{potr}. In
particular this result shows that we should expect the inverse
problem to be much more rigid in  the case of the wave equation than
it is for the Sturm--Liouville problem. This should be understood in
the sense that, at least in the case of constant damping, it will
not be possible to perturb the damping term without disturbing the
spectrum, as is the case for the potential in the Sturm--Liouville
problem.

The plan of the paper is as follows. In the next section we set
the notation and state the main results of the paper. The proof
of the asymptotics of the eigenvalues is done in
Sections~\ref{sec3} and~\ref{sec4}, where in the first of these
we derive the form of the fundamental solutions of
equation~(\ref{waveeig1}), while in the second we apply a
shooting method to these solutions to obtain the formula for the
eigenvalues as zeros of an entire function -- the idea is the
same as that used in~\cite{cox}. Finally, in Section~\ref{sec5}
we prove the trace formula.

\section{Notation and results}

It is easy to check that if $\l$ is an eigenvalue of the problem
(\ref{waveeig1}), (\ref{waveeig2}), then $\overline{\l}$ is also
an eigenvalue of the same problem. In view of this property, we
denote the eigenvalues of this problem by $\l_n$, $n\not=0$, and
order them as follows
\begin{equation*}
\ldots\leqslant \IM\l_{-2}\leqslant \IM\l_{-1}\leqslant
\IM\l_1\leqslant \IM\l_2\leqslant \ldots
\end{equation*}
while assuming that $\l_{-n}=\overline{\l}_n$. We also suppose that
possible zero eigenvalues are $\l_{\pm 1}=\l_{\pm 2}=\ldots=\l_{\pm
p}=0$. If $p=0$, the problem (\ref{waveeig1}), (\ref{waveeig2}) has
no zero eigenvalues. For any function $f=f(x)$ we denote $\la
f\ra:=\int_0^1 f(x)\di x$. %In what follows we assume that $a,b\in
%C^\infty[0,1]$.
\begin{theorem}\label{thmasympt}
Suppose $a\in C^{m+1}[0,1]$, $b\in C^m[0,1]$, $m\geqslant 1$. The
eigenvalues of~(\ref{waveeig1}), (\ref{waveeig2}) have the following
asymptotic behaviour as $n\to\pm\infty$:
\begin{align}
\l_n&=\pi n \iu +\sum\limits_{j=0}^{m-1} c_j n^{-j}+\Odr(n^{-m}),
\label{1.4}
\end{align}
were the $c_j$'s are numbers which can be determined explicitly.
In particular,
\begin{align}
& c_0 =-\la a\ra, \quad c_1=\frac{\la
a^2+b\ra}{2\pi\iu},\label{1.5}
\\
& c_2=\frac{1}{2\pi^2}\left[\la a(a^2+b)\ra-\la a\ra\la a^2+b\ra
+\frac{a'(1)-a'(0)}{2}\right]. \label{1.6}
\end{align}
%\[
%\begin{array}{lll}
%\lambda_{n} & = & n\pi i-\dint_{0}^{1}a(x)\,{\rm dx} + \frac{\ds
%1}{\ds 2n \pi i}\dint_{0}^{1} a^2(x)-b(x)\,{\rm dx}\eqskip & &
%\hspace*{0.5cm} \frac{\ds 1}{\ds 2n^2\pi^2}\left[
%-\dint_{0}^{1}a(x)\;{\rm dx}\dint_{0}^{1}a^2(x)-b(x)\,{\rm
%dx}+\dint_{0}^{1} a(x)\left[a^2(x)-b(x)\right]\,{\rm dx}\right.
%\eqskip & &  \left.\hspace*{1.0cm}+ \frac{\ds a'(1)-a'(0)}{\ds
%2}\right] + \Odr\left(n^{-3}\right).
%\end{array}
%\]
\end{theorem}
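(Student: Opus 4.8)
The plan is to mimic the classical Sturm--Liouville approach: construct fundamental solutions of the ODE \eqref{waveeig1} with prescribed behaviour at $x=0$, impose the boundary condition at $x=1$ to obtain a characteristic equation whose zeros are the eigenvalues, and then extract the asymptotics of these zeros by a perturbation/bootstrapping argument in powers of $n^{-1}$.

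\medskip\noindent\textbf{Step 1: WKB-type fundamental solutions.}
First I would seek a solution of $u_{xx}=(\lambda^2+2\lambda a-b)u$ in the form of a complex WKB (Liouville--Green) Ansatz. Since for large $|\lambda|$ the dominant balance is $u_{xx}\approx \lambda^2 u$, I write a solution as $u_\pm(x,\lambda)=\exp\bigl(\pm\lambda x+\int_0^x S(t,\lambda)\,\di t\bigr)$ and plug into the equation to get a Riccati equation for $S$; expanding $S=\sum_{k\geqslant 0}S_k(x)\lambda^{-k}$ and solving recursively gives $S_0=\mp a$, and subsequent $S_k$ expressed through $a$, $b$ and their derivatives (this is where the hypotheses $a\in C^{m+1}$, $b\in C^m$ enter, guaranteeing enough smooth terms). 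One then proves rigorously, via an integral-equation (variation-of-parameters) formulation and a contraction estimate, that there exist exact solutions of \eqref{waveeig1} agreeing with these asymptotic series up to an error $\Odr(\lambda^{-m})$, uniformly for $x\in[0,1]$ and $\lambda$ in the relevant region of the complex plane. This is carried out in Section~\ref{sec3}.

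\medskip\noindent\textbf{Step 2: the characteristic function and a shooting argument.}
Let $\varphi(x,\lambda)$ be the solution with $\varphi(0,\lambda)=0$, $\varphi_x(0,\lambda)=1$; then $\lambda$ is an eigenvalue of \eqref{waveeig1}, \eqref{waveeig2} precisely when $F(\lambda):=\varphi(1,\lambda)=0$. Writing $\varphi$ as a linear combination of the two fundamental solutions from Step~1 and using their explicit asymptotic form, one obtains $F(\lambda)=\frac{1}{2\lambda}\Bigl(e^{\lambda+\int_0^1(-a+\cdots)}-e^{-\lambda+\int_0^1(a+\cdots)}\Bigr)$ with controlled lower-order corrections, so that $F$ is (up to a nonvanishing factor) an entire function of $\lambda$ whose zeros cluster near the points where $e^{2\lambda}$ equals a known quantity, i.e. near $\lambda=\pi n\iu-\la a\ra+o(1)$. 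This pins down the leading two terms $\pi n\iu$ and $c_0=-\la a\ra$ and, by a standard argument (Rouché, or the argument principle applied on small circles around $\pi n\iu$), shows the eigenvalues are simple for large $|n|$ and in bijection with $n\in\bbZ$, $|n|$ large.

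\medskip\noindent\textbf{Step 3: bootstrapping the full expansion.}
With $\lambda_n=\pi n\iu + \mu_n$ and $\mu_n\to c_0$, substitute into the equation $F(\lambda_n)=0$, take logarithms of the resulting transcendental relation, and expand both sides in powers of $n^{-1}$. This yields a relation of the form $\mu_n = c_0 + (\text{terms involving }\mu_n\text{ and known integrals})\cdot n^{-1}+\cdots$, which one solves iteratively: knowing $\mu_n=c_0+\Odr(n^{-1})$ feeds the right-hand side to produce $\mu_n=c_0+c_1n^{-1}+\Odr(n^{-2})$, and so on up to order $n^{-m}$, each $c_j$ being a polynomial in the integrals $\la\cdot\ra$ of products of $a$, $b$ and the boundary values of derivatives of $a$ that appeared in the $S_k$'s. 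The explicit formulas \eqref{1.5}, \eqref{1.6} then come out of carrying this to second and third order; the term $\tfrac12(a'(1)-a'(0))$ in $c_2$ is exactly the boundary contribution from integrating $S_2$, which contains $a'$.

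\medskip\noindent\textbf{Main obstacle.}
The delicate point is Step~1: establishing the WKB expansion \emph{rigorously and uniformly} on $[0,1]$ for complex $\lambda$ in a strip/sector, with error bounds strong enough to be differentiated or iterated $m$ times without loss. The damping term $2\lambda a$ is of the same order as $\lambda^2$ only in the sense of being the next-order term, so one must track the exponential weights $e^{\pm\RE\lambda\,x}$ carefully in the integral equation to get a genuine contraction; this is the analogue of the classical estimates for Sturm--Liouville but complicated by the first-order-in-$\lambda$ coefficient and by the fact that the problem is non-self-adjoint, so there is no a priori control from a quadratic form. Once the uniform asymptotics of the fundamental solutions is in hand with $\Odr(\lambda^{-m})$ remainder, Steps~2 and~3 are essentially the bookkeeping familiar from the Sturm--Liouville eigenvalue asymptotics.
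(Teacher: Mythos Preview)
Your proposal is correct and follows essentially the same route as the paper: Section~\ref{sec3} carries out exactly your Step~1 (the WKB ansatz $u_\pm=\exp(\pm\lambda x\pm\int_0^x\phi_\pm)$ with a Riccati recursion for $\phi_\pm$, justified by an integral-equation contraction argument), and Section~\ref{sec4} does your Steps~2 and~3 (shooting with $\varphi(0)=0$, $\varphi'(0)=1$, a Rouch\'e count localizing the zeros, and iterative substitution into the transcendental equation $2\lambda+\la\phi_++\phi_-\ra=2\pi n\iu$). One small slip: with your sign convention $u_\pm=\exp(\pm\lambda x+\int S)$ the leading Riccati term is $S_0=\pm a$, not $\mp a$; the paper's convention $\phi_0^{(\pm)}=a$ corresponds to the extra $\pm$ in front of the integral.
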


%We note that the mentioned smoothness of the functions $a$ and $b$
%is required since we obtain the complete asymptotic expansion
%(\ref{1.4}) for the eigenvalues $\l_n$. If one needs only a fixed
%number of the terms of the asymptotics, it is possible to weaken the
%assumption on the smoothness of $a$ and $b$ accordingly.  We also
%stress that the series (\ref{1.4}) is an asymptotic one and does not
%necessarily converge.

A straightforward consequence of the fact that the spectrum determines
the average as well as the $L^{2}$ norm of the damping term (assuming
$b$ fixed) is that the spectrum corresponding to the constant damping
determines this damping uniquely.

\begin{corollary}\label{th1.2}
Assume that $a\in C^3[0,1]$, $\l_n$ are the eigenvalues of the
problem (\ref{waveeig1}), (\ref{waveeig2}), the function $b\in
C^2[0,1]$ is fixed, and the formula (\ref{1.4}) gives the
asymptotics for these eigenvalues. Then the function $a(x)$ is
constant, if and only if
\begin{equation*}%\l%abel{1.7}
c_0^2=2\pi\iu c_1-\la b\ra,
\end{equation*}
in which case $a(x)\equiv -c_0$.
%
%Let $a_{0}$ be a constant, and assume the potential $b$ to be
%fixed.  Then $\spec(a_{0})=\spec(a)$ if and only if $a(x)\equiv
%a_{0}$.
\end{corollary}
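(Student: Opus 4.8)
The plan is to read off the explicit values of $c_0$ and $c_1$ from Theorem~\ref{thmasympt} and then recognise the stated condition as the equality case of the Cauchy--Schwarz inequality applied to the damping term $a$.

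Since $a\in C^3[0,1]$ and $b\in C^2[0,1]$, Theorem~\ref{thmasympt} applies with $m=2$, so the coefficients in the expansion~(\ref{1.4}) are uniquely determined and are given by~(\ref{1.5}); in particular
\[
c_0=-\la a\ra,\qquad 2\pi\iu\,c_1=\la a^2+b\ra=\la a^2\ra+\la b\ra .
\]
Consequently
\[
2\pi\iu\,c_1-\la b\ra-c_0^2=\la a^2\ra-\la a\ra^2=\int_0^1\bigl(a(x)-\la a\ra\bigr)^2\di x ,
\]
so the identity $c_0^2=2\pi\iu c_1-\la b\ra$ holds precisely when this last integral vanishes. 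As $a$ is a real--valued continuous function on $[0,1]$, the integrand is a nonnegative continuous function, and the integral is zero if and only if $a(x)=\la a\ra$ for all $x\in[0,1]$, i.e. if and only if $a$ is constant. In that case $c_0=-\la a\ra=-a$, which gives $a\equiv-c_0$. Conversely, if $a$ is constant then $\la a^2\ra=\la a\ra^2$ and the identity holds trivially. This establishes the equivalence.

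The argument contains no real difficulty once Theorem~\ref{thmasympt} is available: all the content lies in the fact that the first two coefficients of~(\ref{1.4}) recover the mean $\la a\ra$ and the mean square $\la a^2\ra$ (the latter up to the known contribution of $b$), together with the elementary observation that $\la a^2\ra-\la a\ra^2$ is the variance of $a$ and hence vanishes exactly for constant $a$. The only points worth checking are that the prescribed regularity is enough to guarantee the presence and the stated values of $c_0$ and $c_1$ in the expansion, and that the continuity of $a$ upgrades ``constant almost everywhere'' to ``constant everywhere''; both are immediate.
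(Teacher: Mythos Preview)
Your argument is correct and follows essentially the same route as the paper: both extract $\la a\ra$ and $\la a^2\ra$ from $c_0,c_1$ via (\ref{1.5}) and then observe that $\la a^2\ra=\la a\ra^2$ characterises constant $a$. The paper phrases this last step as the equality case of Cauchy--Schwarz while you phrase it as vanishing of the variance $\int_0^1(a-\la a\ra)^2\di x$, but these are the same observation.
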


In the same way, the asymptotic expansion allows us to
derive other spectral invariants in terms of the damping term $a$.
However, these do not have such a simple interpretation as in the
case of the above constant damping result.
\begin{corollary}\label{th1.3}
Suppose $b\equiv0$, $a_i(x)=a_0(x)+\widetilde{a}_i(x)$, $i=1,2$,
where $a_0(1-x)=a_0(x)$, $\widetilde{a}_i(1-x)=-\widetilde{a}_i(x)$,
$\widetilde{a}_i, a_0\in C^4[0,1]$, and for $a=a_i$ the problems
(\ref{waveeig1}), (\ref{waveeig2}) have the same spectra. Then
\begin{equation*}%\l%abel{1.8}
\la \widetilde{a}_1^2\ra=\la \widetilde{a}_2^2\ra,\quad \la
a_0\widetilde{a}_1^2\ra=\la a_0\widetilde{a}_2^2\ra
\end{equation*}
is valid.
\end{corollary}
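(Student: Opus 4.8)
The plan is to apply Theorem~\ref{thmasympt} with $b\equiv 0$ to each of the two damping terms $a_1$ and $a_2$ and compare the resulting coefficients $c_j$. Since the two problems are assumed to have the same spectrum, the asymptotic expansion~(\ref{1.4}) forces $c_j(a_1)=c_j(a_2)$ for every $j$. The corollary only claims something about $c_1$ and $c_2$ (or rather, the analogous higher coefficient that first sees the symmetric part), so I would first extract the relevant identities. From~(\ref{1.5}) with $b\equiv 0$ we get $c_1=\la a^2\ra/(2\pi\iu)$, hence $\la a_1^2\ra=\la a_2^2\ra$. Now write $a_i=a_0+\widetilde a_i$ with $a_0$ symmetric about $x=1/2$ and $\widetilde a_i$ antisymmetric. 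Then $\la a_i^2\ra=\la a_0^2\ra+2\la a_0\widetilde a_i\ra+\la\widetilde a_i^2\ra$, and the cross term $\la a_0\widetilde a_i\ra$ vanishes because the integrand is antisymmetric about $x=1/2$. Therefore $\la a_1^2\ra=\la a_2^2\ra$ reduces precisely to $\la\widetilde a_1^2\ra=\la\widetilde a_2^2\ra$, the first assertion.

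For the second identity I would use~(\ref{1.6}), again with $b\equiv 0$:
\begin{equation*}
c_2=\frac{1}{2\pi^2}\left[\la a^3\ra-\la a\ra\la a^2\ra+\frac{a'(1)-a'(0)}{2}\right].
\end{equation*}
Equality of spectra gives $c_2(a_1)=c_2(a_2)$. I would then substitute $a_i=a_0+\widetilde a_i$ and expand each term, tracking the parity of every summand under $x\mapsto 1-x$. For $\la a_i^3\ra$ we get $\la a_0^3\ra+3\la a_0^2\widetilde a_i\ra+3\la a_0\widetilde a_i^2\ra+\la\widetilde a_i^3\ra$; the terms $\la a_0^2\widetilde a_i\ra$ and $\la\widetilde a_i^3\ra$ have antisymmetric integrands and hence vanish, leaving $\la a_0^3\ra+3\la a_0\widetilde a_i^2\ra$. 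For the boundary term, antisymmetry of $\widetilde a_i$ about $x=1/2$ gives $\widetilde a_i'(1)=\widetilde a_i'(0)$, so $a_i'(1)-a_i'(0)=a_0'(1)-a_0'(0)$, independent of $i$. Finally $\la a_i\ra\la a_i^2\ra=\la a_0\ra\,\la a_i^2\ra$ since $\la\widetilde a_i\ra=0$, and we already know $\la a_1^2\ra=\la a_2^2\ra$, so this term too is the same for $i=1,2$. Subtracting the two copies of $c_2$ therefore collapses everything to $3\la a_0\widetilde a_1^2\ra=3\la a_0\widetilde a_2^2\ra$, which is the second claimed identity.

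The only genuinely delicate points are bookkeeping ones: one must be sure that the hypotheses $\widetilde a_i,a_0\in C^4[0,1]$ are exactly what is needed to invoke Theorem~\ref{thmasympt} at the order producing $c_2$ (the theorem requires $a\in C^{m+1}$ with $m\geqslant 1$; to read off $c_2$ one needs $m\geqslant 3$, i.e.\ $a\in C^4$, which matches), and that all the parity cancellations are applied to integrands that are genuinely odd about the midpoint rather than merely odd about the origin. I expect the main obstacle — such as it is — to be making sure no symmetric contribution is inadvertently discarded and that the substitution $a_i=a_0+\widetilde a_i$ is carried through consistently in both the volume integrals and the boundary term; once the parity of each monomial is correctly identified, the result is immediate. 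No further analytic input beyond Theorem~\ref{thmasympt} is required.
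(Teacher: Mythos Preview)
Your proposal is correct and follows exactly the paper's approach: equate the asymptotic coefficients $c_1$ and $c_2$ from Theorem~\ref{thmasympt} (with $b\equiv 0$) to obtain $\la a_1^2\ra=\la a_2^2\ra$ and $\la a_1^3\ra=\la a_2^3\ra$, then exploit the parity decomposition $a_i=a_0+\widetilde a_i$ to reduce these to the claimed identities. Your write-up is in fact more careful than the paper's terse proof, explicitly verifying that the boundary term $a_i'(1)-a_i'(0)$ and the product $\la a_i\ra\la a_i^2\ra$ are independent of $i$ before concluding $\la a_1^3\ra=\la a_2^3\ra$.
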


From Theorem~\ref{thmasympt} we have that the quantity
$\RE(\l_n-c_0)$ behaves as $\Odr(n^{-2})$ as $n\to\infty$. This
means that the series
\begin{equation*}
\sum\limits_{\genfrac{}{}{0pt}{}{n=-\infty}{n\not=0}}^{\infty}
(\l_n-c_0)= 2\sum\limits_{n=1}^{\infty} \RE(\l_n-c_0)
\end{equation*}
converges. In the following theorem we express the sum of this
series in terms of the function $a$. This is in fact the
formula for the regularized trace.
\begin{theorem}\label{th1.4}
Let $a\in C^3[0,1]$, $b\in C^2[0,1]$. Then the identity
\begin{equation*}
\sum\limits_{\genfrac{}{}{0pt}{}{n=-\infty}{n\not=0}}^{\infty}
(\l_n-c_0)=\frac{a(0)+a(1)}{2}-\la a\ra
\end{equation*}
holds.
\end{theorem}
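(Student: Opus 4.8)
The plan is to use the characterization of the eigenvalues as zeros of an entire function, obtained (following \cite{cox}) in Sections \ref{sec3}--\ref{sec4}, together with a contour-integration argument of the type standard in the regularized-trace literature for Sturm--Liouville operators. Let $F(\l)$ denote the entire function whose zeros (with multiplicity) are exactly the $\l_n$; from the analysis of the fundamental solutions this $F$ is essentially $\frac{1}{2\iu}\bigl(\psi_+(1,\l)-\psi_-(1,\l)\bigr)$ for suitable solutions of \eqref{waveeig1}, and it has the WKB-type expansion $F(\l)=\sin\l\cdot(1+o(1))$ with controlled correction terms as $|\l|\to\infty$ in the relevant strip. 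The first step is to write, for the contour $\g_N$ equal to the boundary of the rectangle with vertices $\pm(\pi N+\tfrac{\pi}{2})\pm\iu R$ (with $R$ large and fixed, enclosing exactly $\l_{\pm1},\dots,\l_{\pm N}$),
\begin{equation*}
\sum_{0<|n|\le N}(\l_n-c_0)=\frac{1}{2\pi\iu}\oint_{\g_N}(\l-c_0)\,\frac{F'(\l)}{F(\l)}\di\l,
\end{equation*}
using the argument principle and the simplicity of the large eigenvalues (guaranteed by Theorem \ref{thmasympt}). The idea is then to let $N\to\infty$ and to evaluate the limit of the right-hand side by replacing $F$ with its asymptotic expression.

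The second step is the asymptotic evaluation. On the vertical sides of $\g_N$ and on the horizontal sides, $F(\l)/\sin\l\to 1$ with an expansion of the form $1+g_1(\l)+g_2(\l)+\dots$ where $g_k(\l)=\Odr(\l^{-k})$ uniformly; the coefficients here are exactly the objects that produced $c_0,c_1,c_2$ in \eqref{1.5}--\eqref{1.6}, and in particular the boundary terms $a(0),a(1)$ enter through the $\Odr(\l^{-1})$ correction (this is the same mechanism that puts $a'(1)-a'(0)$ into $c_2$). Writing $\frac{F'}{F}=\cot\l+\frac{d}{d\l}\log\!\bigl(1+g_1+g_2+\dots\bigr)$, the term $\oint_{\g_N}(\l-c_0)\cot\l\,\di\l$ is computed directly by residues (each pole $\pi n$ contributes $2\pi\iu(\pi n-c_0)$, and these pair up; the constant $c_0=-\la a\ra$ is designed precisely so that the leading $\pi n$ parts cancel against the regularization, leaving a convergent remainder), while the remaining integral $\frac{1}{2\pi\iu}\oint_{\g_N}(\l-c_0)\,\frac{(g_1+\dots)'}{1+g_1+\dots}\di\l$ is where the answer $\frac{a(0)+a(1)}{2}-\la a\ra$ is read off: the factor $\l-c_0$ times a term of size $\Odr(\l^{-2})$ gives something integrable, and the dominant contribution comes from pairing the $\l$-factor with the $\Odr(\l^{-1})$ piece $g_1$, whose coefficient is $-\frac{a(0)+a(1)}{2\iu}\cdot\frac{\cos\l}{\,?\,}$-type; a residue computation at the zeros of $\sin\l$ extracts $\frac{a(0)+a(1)}{2}$, and the $-c_0=\la a\ra$ combines to give the stated constant. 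One must also check that the horizontal sides of $\g_N$ contribute a vanishing amount as $R\to\infty$ (they do, since $\cot\l\to\mp\iu$ there and the $\l$-linear factor is killed by the exponential decay of the correction terms away from the real axis, or handled by a symmetric principal-value argument).

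The main obstacle I expect is bookkeeping the $\Odr(\l^{-1})$ correction term in $F$ precisely enough: one needs not just that $F(\l)=\sin\l+\Odr(1)$ but the exact coefficient of the $\Odr(1)$ piece (equivalently, of $\l^{-1}$ after dividing by $\sin\l$), including how the Dirichlet boundary data at $x=0$ and $x=1$ feed in, because that coefficient is exactly what delivers $\frac{a(0)+a(1)}{2}$. This requires carrying the fundamental-solution expansion of Section \ref{sec3} to the appropriate order and being careful with the shooting/matching at the endpoints, and then justifying the interchange of the $N\to\infty$ limit with the integral (uniform convergence of $F/\sin\l$ to its expansion on $\g_N$, which follows from the estimates already needed for Theorem \ref{thmasympt}). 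Once that coefficient is pinned down, the rest is a residue calculation and the cancellations are forced by the definition of $c_0$.
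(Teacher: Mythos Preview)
Your approach is genuinely different from the paper's and, as written, has a geometric error that would need to be fixed before it could go through.

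\textbf{What the paper does.} The paper does not use a contour integral at all. Following \cite[Ch.~I, Sec.~14]{LS}, it writes the characteristic function $\g_0(\l)$ in two ways and matches the $\l^{-1}$ coefficients as $\l\to+\infty$ along the \emph{positive real axis}. One expression comes from the Hadamard factorization: setting $\Phi(\l)=\l^{2p}\prod_{n>p}(1-\l/\l_n)(1-\l/\overline{\l}_n)$ and writing $\Phi(\l)=C_0\Psi(\l)\sinh\l/\l$, the paper expands $\ln\Psi(\l)$ as a double series, estimates it term by term using the asymptotics (\ref{1.4}), and obtains
\[
\g_0(\l)=\frac{C_0\E^{P(\l)}\sinh\l}{\l}\Bigl[1-(2S-c_0+\iu\pi c_1)\l^{-1}+\Odr(\l^{-2})\Bigr],
\]
where $S=\sum_{n\ge1}(\RE\l_n-c_0)$. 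The other expression comes directly from Lemma~\ref{lm2.1} and (\ref{3.3}):
\[
\g_0(\l)=\frac{\E^{\l+\la a\ra}}{2\l}\Bigl[1+(\la\phi_1^{(+)}\ra-a(0))\l^{-1}+\Odr(\l^{-2})\Bigr]+\Odr(\l^{-1}\E^{-\l}).
\]
Equating the $\l^{-1}$ coefficients gives $2S$ explicitly; the formulas (\ref{1.5}) and (\ref{2.3}) for $c_0$, $c_1$, $\phi_1^{(+)}$ then collapse this to $\frac{a(0)+a(1)}{2}-\la a\ra$. No residues, no contours.

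\textbf{Where your plan goes wrong.} The eigenvalues satisfy $\l_n=\pi n\iu+c_0+\Odr(n^{-1})$: they accumulate along the \emph{imaginary} axis, and the reference function is $\sinh\l$ (zeros at $\pi n\iu$), not $\sin\l$. Your rectangle with vertices $\pm(\pi N+\tfrac{\pi}{2})\pm\iu R$, ``$R$ large and fixed'', is oriented the wrong way and does not enclose $\l_{\pm1},\dots,\l_{\pm N}$ at all. This is repairable (rotate the contour, replace $\cot\l$ by $\coth\l$), but it is not a typo-level slip: your handling of the ``horizontal sides'' and the claim that $F(\l)=\sin\l\cdot(1+o(1))$ are both built on the wrong picture. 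Separately, the sentence ``the constant $c_0$ is designed precisely so that the leading $\pi n$ parts cancel against the regularization'' is not right: the imaginary parts $\pi n\iu$ cancel because of the symmetry $\l_{-n}=\overline{\l}_n$, not because of $c_0$, and your comparison integral $\oint(\l-c_0)\coth\l\,\di\l$ picks up $\sum_{|n|\le N}(\pi n\iu-c_0)$, whose real part $-2Nc_0$ diverges. You would need to subtract a reference problem (e.g.\ constant damping $a\equiv\la a\ra$) before the contour argument makes sense. Finally, the ``$?$'' in your description of $g_1$ is exactly the quantity $\la\phi_1^{(+)}\ra-a(0)$ that carries the answer; the paper reads it off from (\ref{2.3}), and you would have to do the same.

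A corrected contour argument could be made to work and would be a legitimate alternative, but as it stands the proposal does not prove the theorem.
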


\section{Asymptotics for the fundamental system\label{sec3}}

In this section we obtain the asymptotic expansion for the
fundamental system of the solutions of the equation (\ref{waveeig1})
as $\l\to\infty$, $\l\in \mathbb{C}$. This is done by means of the
standard technique described in, for instance, \cite[Ch. I\!V, Sec.
4.2, 4.3]{Er}, \cite[Ch. I\!I, Sec. 3]{F}.

We begin with the formal construction assuming the
asymptotics to be of the form
\begin{equation}\label{2.1}
u_\pm(x,\l)=\E^{\pm \l x\pm \int\limits_0^x\phi_\pm(t,\l)\di t},
\end{equation}
where
\begin{equation}\label{2.1a}
\phi_\pm(x,\l)=\sum\limits_{i=0}^{m}
\phi_i^{(\pm)}(x)\l^{-i}+\Odr(\l^{-m-1}),\quad m\geqslant 1.
\end{equation}
In what follows we assume that $a\in C^{m+1}[0,1]$, $b\in C^m[0,1]$.

We substitute the series (\ref{2.1}), (\ref{2.1a}) into
(\ref{waveeig1}) and equate the coefficients of the same powers of
$\l$. It leads us to a recurrent system of equations determining
$\phi_i^{(\pm)}$ which read as follows:
\begin{align}
&\phi_0^{(\pm)}=a, \label{2.2}
\\
&\phi_1^{(\pm)}=-\frac{1}{2}(\pm a'+a^2+b),\label{2.3}
\\
&\phi_i^{(\pm)}=-\frac{1}{2}\left(\pm{\phi_{i-1}^{(\pm)}}'
+\sum\limits_{j=0}^{i-1}\phi_j^{(\pm)}\phi_{i-j-1}^{(\pm)}
\right),\quad i\geqslant 2.
\end{align}
The main aim of this section is to prove that there exist solutions
to (\ref{waveeig1}) having the asymptotics (\ref{2.1}),
(\ref{2.1a}). In other words, we are going to justify these
asymptotics rigorously. We will do this for $u_+$, the case of $u_-$
following along similar lines.

Let us write
\begin{equation*}%\l%abel{2.6}
U_m(x,\l)=\E^{\l x+\sum\limits_{i=0}^{m}\l^{-i}\int\limits_{0}^x
\phi_i^{(+)}(t)\di t}.
\end{equation*}
In view of the assumed smoothness for $a$ and $b$ we conclude that
$U_m\in C^2[0,1]$. It  is also easy to check that
\begin{equation}\label{2.7}
\begin{aligned}
&U_m''-\l^2 U_m-2\l a U_m+b U_m=\l^{-m}\E^{\l x}f_m(x,\l),\quad
x\in[0,1],
\\
&U_m(0)=1,\quad
U'_m(0)=\l+\sum\limits_{i=0}^{m}\phi_i^{(+)}(0)\l^{-i},
\end{aligned}
\end{equation}
where the function $f_m$ satisfies the estimate
\begin{equation*}%\l%abel{2.8}
|f_m(x,\l)|\leqslant C_m
\end{equation*}
uniformly for large $\l$ and $x\in[0,1]$

We consider first the case $\RE\l\geqslant 0$. Differentiating
the function $u_+$ formally we see that
\begin{equation*}
u'_+(0,\l)=\l+\phi_+(0,\l)=\l+\sum\limits_{i=0}^{m} \phi_i^{(+)}
(0)\l^{-i}+\Odr(\l^{-m-1}).
\end{equation*}
Let %$A_0(\l)$ be a function satisfying the asymptotics
\begin{equation*}%\l%abel{2.5}
A_0(\l)=\l+\sum\limits_{i=0}^{m} \phi_i^{(+)}(0)\l^{-i}, %\quad
%\l\to+\infty.
\end{equation*}
%It is known that such a function exists in the sense of asymptotic
%series~\cite{Er}.
\NB and $u_+(x,\l)$ be the solution to the Cauchy problem for the
equation (\ref{waveeig1}) subject to the initial conditions
\begin{equation*}%\l%abel{2.7a}
u_+(0,\l)=1,\quad u_+'(0,\l)=%\l+
A_0(\l).
\end{equation*}
We introduce one more function $w_m(x,\l)=u_+(x,\l)/U_m(x,\l)$. This
function solves the Cauchy problem
%\begin{equation}\l%abel{2.9}
\begin{align*}
&(U_m^2 w_m')'+\l^{-m}U_m\E^{\l x}f_m w_m=0,\quad x\in[0,1],
\\
& w_m(0,\l)=1,\quad w'_m(0,\l)=0. %\l^{-n+1}g_n(\l),
\end{align*}
%\end{equation}
%where the constant $g_n$ is bounded uniformly in $\l$ large enough.
The last problem is equivalent to the integral equation
%\begin{equation}\l%abel{2.10}
\begin{align*}
&w_m(x,\l)+\l^{-m}(K_m(\l) w_m)(x,\l)=
%\l^{-n+1}g_n(\l)\int\limits_0^x U_n^{-2}(t)\di t+
1,
\\
&(K_m(\l)w_m)(x,\l):=\int\limits_0^x
U_m^{-2}(t_1)\int\limits_0^{t_1} U_m(t_2) \E^{\l t_2} f_m(t_2,\l)
w_m(t_2,\l)\di t_2 \di t_1.
\end{align*}
%\end{equation}
Since $\RE\l\geqslant 0$ for $0\geqslant t_2\geqslant
t_1\geqslant 1$, the estimate
\begin{equation*}%\l%abel{2.11}
|U_m^{-2}(t_1,\l)U_m(t_2,\l)\E^{\l t_1}|\leqslant C_m
\end{equation*}
holds true, where the constant $C_m$ is independent of $\l$, $t_1$,
$t_2$. Hence, the integral operator $K_m: C[0,1]\to C[0,1]$ is
bounded uniformly in $\l$ large enough, $\RE \l\geqslant 0$.
Employing this fact, we conclude that
\begin{equation*}
w_m(x)=1+\Odr(\l^{-m}),\qquad \l\to\infty,\quad\RE\l\geqslant0,
\end{equation*}
in the $C^2[0,1]$-norm. Hence, the formula (\ref{2.1}), where
\begin{equation}\label{3.13}
\phi_+(x,\l)=\sum\limits_{i=0}^{m-1}
\phi_i^{(+)}(x)\l^{-i}+\Odr(\l^{-m}),
\end{equation}
gives the asymptotic expansion for the solution of the Cauchy
problem (\ref{waveeig1}), (\ref{2.7}) as $\l\to\infty$,
$\RE\l\geqslant0$.

Suppose now that $\RE\l\leqslant0$. Let $A_1(\l)$, $A_2(\l)$ be
functions having the asymptotic expansions
\begin{equation*}
A_1(\l)=\l+\sum\limits_{i=0}^m
\l^{-i}\int\limits_0^1\phi_i^{(+)}(x)\di x,\quad
A_2(\l)=\l+\sum\limits_{i=0}^m \phi_i^{(+)}(1)\l^{-i}.
\end{equation*}
We define the function $\widetilde{u}_+(x,\l)$ as the solution
to the Cauchy problem for equation (\ref{waveeig1}) subject to
the initial conditions
\begin{equation*}
\widetilde{u}_+(1,\l)=\E^{A_1(\l)},\quad
\widetilde{u}'_+(1,\l)=A_2(\l)\E^{A_1(\l)}.
\end{equation*}
In a way analogous to the arguments given above, it is possible to check
that the function $\widetilde{u}_+$ has the asymptotic expansion
(\ref{2.1}) in the $C^2[0,1]$-norm as $\l\to+\infty$, $\RE\l\leqslant
0$. Hence, $\widetilde{u}_+(0,\l)=1+\Odr(\l^{-m})$ for each
$m\geqslant 1$. In view of this identity we conclude that the
function $u_+(x,\l):=\widetilde{u}_+(x,\l)/\widetilde{u}_+(0,\l)$ is
a solution to (\ref{waveeig1}), satisfies the condition
$u_+(0,\l)=1$, and has the asymptotic expansion (\ref{2.1}), where
the asymptotics for  $\phi_+$ is given in (\ref{3.13}).

For convenience we summarize the obtained results in
\begin{lemma}\label{lm2.1}
Let $a\in C^{m+1}[0,1]$, $b\in C^m[0,1]$. There exist two linear
independent solutions to the equation (\ref{waveeig1}) satisfying
the initial condition $u_\pm(0,\l)=1$ and having the asymptotic
expansions (\ref{2.1}) in the $C^2[0,1]$-norm as $\l\to\infty$,
$\l\in\mathbb{C}$, where
\begin{equation*}%\l%abel{3.15}
\phi_\pm(x,\l)=\sum\limits_{i=0}^{m-1}
\phi_i^{(\pm)}(x)\l^{-i}+\Odr(\l^{-m}).
\end{equation*}
\end{lemma}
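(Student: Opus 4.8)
The plan is to make rigorous the formal WKB-type construction already carried out in the body of the section, which produces the candidate phase functions $\phi_i^{(\pm)}$ via the recursion~(\ref{2.2})--(\ref{2.3}). Most of the work has, in fact, been done for $u_+$: the argument splits according to the sign of $\RE\l$, and in each half-plane one introduces the explicit approximate solution $U_m(x,\l)$, reduces the equation~(\ref{waveeig1}) for the ratio $w_m=u_+/U_m$ to a Volterra-type integral equation with kernel of size $\Odr(\l^{-m})$, and then uses the uniform boundedness of the integral operator $K_m$ on $C[0,1]$ (for $\RE\l\geqslant0$) together with a Neumann-series/contraction argument to conclude $w_m=1+\Odr(\l^{-m})$ in the $C^2[0,1]$-norm. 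For $\RE\l\leqslant0$ one switches the ``active'' endpoint to $x=1$, prescribes Cauchy data there matching the formal expansion, solves by the analogous estimate, and finally renormalizes by dividing by $\widetilde u_+(0,\l)=1+\Odr(\l^{-m})$ so that $u_+(0,\l)=1$. Thus the statement for $u_+$ is already established; what remains is to record the case of $u_-$ and the linear independence.

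First I would treat $u_-$. Here I would observe that the substitution $\l\mapsto-\l$ interchanges the ansätze $u_+$ and $u_-$ in~(\ref{2.1}), and that the recursion for $\phi_i^{(-)}$ is obtained from that for $\phi_i^{(+)}$ by the sign change $\phi_{i-1}^{(\pm)}{}'\mapsto-\phi_{i-1}^{(\pm)}{}'$ already displayed in~(\ref{2.2})--(\ref{2.3}); consequently every estimate proved for $u_+$ with $\RE\l\geqslant0$ becomes the corresponding estimate for $u_-$ with $\RE\l\leqslant0$, and vice versa. So the construction of $u_-$ is verbatim the construction of $u_+$ with the roles of the two half-planes exchanged, and it yields a solution with $u_-(0,\l)=1$ and the asymptotics~(\ref{2.1}) with $\phi_-(x,\l)=\sum_{i=0}^{m-1}\phi_i^{(-)}(x)\l^{-i}+\Odr(\l^{-m})$, uniformly for $\l\in\C$, $|\l|$ large.

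Next I would check linear independence. Using $u_\pm(0,\l)=1$ together with the differentiated expansions $u_\pm'(0,\l)=\pm\l+\phi_\pm(0,\l)=\pm\l+\Odr(1)$, the Wronskian at $x=0$ is
\begin{equation*}
W(\l)=u_+(0,\l)u_-'(0,\l)-u_+'(0,\l)u_-(0,\l)=-2\l+\Odr(1),
\end{equation*}
which is nonzero for all $|\l|$ sufficiently large; hence $u_+$ and $u_-$ are linearly independent solutions of~(\ref{waveeig1}) in that range. Finally I would assemble these pieces into the statement of the lemma.

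The only genuine obstacle is the uniformity of the integral-equation estimate across the whole complex plane, and in particular the transition region $\RE\l\approx0$ where neither exponential $\E^{\pm\l x}$ dominates; this is handled precisely by the two complementary Cauchy problems (data at $x=0$ for $\RE\l\geqslant0$, data at $x=1$ for $\RE\l\leqslant0$), so that in each closed half-plane the relevant ratio $U_m^{-2}(t_1)U_m(t_2)\E^{\l t_1}$ stays bounded for $0\leqslant t_2\leqslant t_1\leqslant1$; this is exactly the mechanism used in~\cite{cox}. Everything else is the bookkeeping of the recursion and routine Gronwall/contraction estimates, which I would not reproduce in detail.
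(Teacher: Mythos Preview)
Your proposal is correct and follows essentially the same route as the paper, which indeed carries out the whole argument in the body of Section~\ref{sec3} and then states the lemma as a summary; your addition of the Wronskian computation for linear independence is a clean way to finish.

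One slip worth fixing: the substitution $\l\mapsto-\l$ does \emph{not} interchange the two ans\"atze in~(\ref{2.1}), because equation~(\ref{waveeig1}) itself is not invariant under it (the term $2\l a$ changes sign), and already at leading order $\phi_0^{(+)}=\phi_0^{(-)}=a$, so the exponents $\l x+\int_0^x a$ and $-\l x-\int_0^x a$ are not exchanged by $\l\mapsto-\l$. Your conclusion---that the construction of $u_-$ mirrors that of $u_+$ with the half-planes swapped---is nonetheless correct; it is simply obtained by repeating the argument verbatim with $U_m^{-}(x,\l)=\exp\big(-\l x-\sum_{i=0}^{m}\l^{-i}\int_0^x\phi_i^{(-)}\big)$, for which the kernel bound $|(U_m^{-})^{-2}(t_1)\,U_m^{-}(t_2)\,\E^{-\l t_2}|\leqslant C_m$ holds when $\RE\l\leqslant 0$ and $0\leqslant t_2\leqslant t_1\leqslant 1$. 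This is exactly what the paper means by ``the case of $u_-$ following along similar lines.''
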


\section{Asymptotics of the eigenvalues\label{sec4}}

This section is devoted to the proof of Theorem~\ref{thmasympt} and
Corollaries~\ref{th1.2} and~\ref{th1.3}. We assume that $a\in
C^{m+1}[0,1]$, $b\in C^m[0,1]$, $m\geqslant 1$.

Let $u=u(x,\l)$ be the solution to (\ref{waveeig1}) subject to
the initial conditions $u(0,\l)=0$, $u'(0,\l)=1$. Denote
$\g_0(\l):=u(1,\l)$. The function $\g_0$ is entire, and its zeros
coincide with the eigenvalues of the problem
(\ref{waveeig1}), (\ref{waveeig2}). It follows from
Lemma~\ref{lm2.1} that, for $\l$ large enough the function $u(x,\l)$
can be expressed in terms of $u_\pm$ by
\begin{equation*}%\l%abel{3.1}
u(x,\l)=\frac{u_+(x,\l)-u_-(x,\l)}{u'_+(0,\l)-u'_-(0,\l)}.
\end{equation*}
The denominator is non-zero, since due to (\ref{2.1})
\begin{equation*}%\l%abel{3.2}
u'_+(0,\l)-u'_-(0,\l)=2\l+2\la a\ra+\Odr(\l^{-1}),\quad
\l\to\infty.
\end{equation*}
Thus, for $\l$ large enough
\begin{equation}\label{3.3}
\g_0(\l)=\frac{u_+(1,\l)-u_-(1,\l)}{u'_+(0,\l)-u'_-(0,\l)}.
\end{equation}

\begin{lemma}\label{lm3.1}
For $n$ large enough, the set
\[
Q:=\{\l: |\RE\l|<\pi n+\pi/2, |\IM\l|<\pi n+\pi/2\}
\] contains exactly $2n$ eigenvalues of
the problem (\ref{2.1}), (\ref{2.2}).
\end{lemma}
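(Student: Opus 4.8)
The plan is to count zeros of the entire function $\g_0$ inside the square $Q$ by a Rouché-type argument, comparing $\g_0$ with an explicit model function whose zeros are easy to locate. From the asymptotic formula (\ref{2.1}) together with Lemma~\ref{lm2.1} and the representation (\ref{3.3}), one has, for $\l$ large,
\begin{equation*}
u_\pm(1,\l)=\E^{\pm\l}\left(\E^{\pm\la a\ra}+\Odr(\l^{-1})\right),\qquad
u'_+(0,\l)-u'_-(0,\l)=2\l\left(1+\Odr(\l^{-1})\right),
\end{equation*}
so that
\begin{equation*}
\g_0(\l)=\frac{\E^{\l}\E^{\la a\ra}-\E^{-\l}\E^{-\la a\ra}+\Odr(\l^{-1}\E^{|\RE\l|})}{2\l\,(1+\Odr(\l^{-1}))}.
\end{equation*}
Hence it is natural to compare $\g_0$ with the model function $g(\l):=\dfrac{\sinh(\l+\la a\ra)}{\l}$ (or, absorbing the shift, simply $\sinh\l$), whose zeros are $\l=\pi\iu k-\la a\ra$, $k\in\bbZ$. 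First I would fix once and for all that $n$ is large and work on the boundary $\partial Q$ of the square. On $\partial Q$ the points stay at distance $\geqslant \pi/2$ from every zero $\pi\iu k-\la a\ra$ of the model, so $|\sinh(\l+\la a\ra)|$ is bounded below by a positive constant times $\E^{|\RE\l|}$ there; this is the standard elementary lower bound for $\sinh$ on a grid of such squares, which I would state and prove in one line using $|\sinh z|^2=\sinh^2(\RE z)+\sin^2(\IM z)$.

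The key step is then the Rouché comparison on $\partial Q$. Writing $\g_0(\l)=\dfrac{h(\l)}{2\l(1+\Odr(\l^{-1}))}$ with $h(\l)=2\sinh(\l+\la a\ra)+r(\l)$ and $|r(\l)|\leqslant C|\l|^{-1}\E^{|\RE\l|}$, the denominator $2\l(1+\Odr(\l^{-1}))$ is zero-free and bounded away from $0$ and $\infty$ (after dividing by $\l$) on $\partial Q$, so the number of zeros of $\g_0$ inside $Q$ equals the number of zeros of $h$ inside $Q$. On $\partial Q$ we have $|r(\l)|\leqslant C(\pi n+\pi/2)^{-1}\E^{|\RE\l|}<|2\sinh(\l+\la a\ra)|$ once $n$ is large, by the lower bound from the previous paragraph. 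Rouché's theorem then gives that $h$ and $2\sinh(\l+\la a\ra)$ have the same number of zeros in $Q$. It remains to count the zeros $\pi\iu k-\la a\ra$ lying in $Q$: since $|\la a\ra|$ is a fixed constant, for $n$ large these are exactly the $k$ with $|\pi k|<\pi n+\pi/2$, i.e. $|k|\leqslant n$, giving $2n+1$ values of $k$; the value $k=0$ must be discarded because $\sinh(\l+\la a\ra)/\l$ — not $\sinh(\l+\la a\ra)$ itself — is the true comparison function, and the factor $1/\l$ cancels the simple zero of $\sinh$ at $\l=-\la a\ra$, so $\g_0$ has $2n$ zeros in $Q$. (Alternatively one keeps $h$ as is, notes it has $2n+1$ zeros, and subtracts the single spurious zero introduced by multiplying through by $\l$.)

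The main obstacle is bookkeeping rather than conceptual: one must be careful that the error term in (\ref{3.3}) is genuinely $\Odr(\l^{-1})$ \emph{uniformly} on all of $\partial Q$ (not merely along rays), which is exactly what the $C^2[0,1]$-uniformity in Lemma~\ref{lm2.1} provides, and one must correctly track the single zero lost to the denominator $2\l+2\la a\ra+\Odr(\l^{-1})$ so as to land on $2n$ and not $2n+1$. A secondary point is to check that $\partial Q$ does not pass through a zero of $\g_0$ for large $n$, which is automatic from the same lower bound on $|\sinh(\l+\la a\ra)|$ together with the Rouché estimate. Once these uniformity statements are in place the argument is a routine application of the argument principle.
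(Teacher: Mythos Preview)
Your approach is the paper's: a Rouch\'e comparison of $\g_0$ with an explicit model built from $\sinh(\l+\la a\ra)$, using the uniform asymptotics of Lemma~\ref{lm2.1} on $\partial Q$. The paper's only cosmetic difference is that it first multiplies $\g_0$ by the nowhere-vanishing entire factor $\E^{\l+\la a\ra}$ and compares the resulting entire function $\g_1$ with the entire function $\g_2(\l)=\big(\E^{2(\l+\la a\ra)}-1\big)/\big(2(\l+\la a\ra)\big)$, whose zeros are exactly $\pi\iu k-\la a\ra$, $k\not=0$; this yields $2n$ directly without any pole bookkeeping.

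One genuine slip in your write-up: the factor $1/\l$ does \emph{not} ``cancel the simple zero of $\sinh$ at $\l=-\la a\ra$''; its pole is at $\l=0$, not at $-\la a\ra$. The correct accounting is that $g(\l)=\sinh(\l+\la a\ra)/\l$ is meromorphic with $2n+1$ zeros and one pole inside $Q$, so its winding number along $\partial Q$ is $2n$, and the Rouch\'e/argument-principle comparison then gives $2n$ zeros for the entire $\g_0$. Your parenthetical alternative (work with $h$ and subtract one) is also not quite clean as written, since $h$ is defined only through large-$|\l|$ asymptotics and is not an a priori entire function on all of $Q$; you would need to define $h$ globally (e.g.\ as $2\l\,\g_0(\l)$ times the explicit entire correction) before counting its zeros. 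The paper's normalization avoids both issues, but either route is easily made rigorous once the counting is stated correctly.
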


\begin{proof}
Let $\g_1(\l):=\g_0(\l)\E^{\l+\la a\ra}$. The zeros of $\g_1$ are
those of $\g_0(\l)$. For $\l$ large enough we represent the function
$\g_1(\l)$ as \NB
\begin{align*}
\g_1(\l)=&\g_2(\l)+\g_3(\l),\quad \g_2:=\frac{\E^{2(\l+
a(0))}-1}{2(\l+a(0))},
\\
\g_3(\l)=&-\g_2(\l)\frac{\widetilde{\phi}_+(0,\l)+
\widetilde{\phi}_-(0,\l)+2(1+\l^{-1}a(0))(1-\E^{\l^{-1}\la
\widetilde{\phi}_+(\cdot,\l)\ra}) }{2\l(\l+a(0))+
\widetilde{\phi}_+(0,\l)+ \widetilde{\phi}_-(0,\l)}
\\
&+ \frac{\E^{\l^{-1}\la\widetilde{\phi}_+(\cdot,\l)\ra} -
\E^{-\l^{-1}\la\widetilde{\phi}_-(\cdot,\l)\ra}}{2(\l+a(0))+\l^{-1}
(\widetilde{\phi}_+(0,\l)+ \widetilde{\phi}_-(0,\l))},
\\
\widetilde{\phi}_\pm(x,\l):=&\l^{-1}(\phi_\pm(x,\l)-a(x)).
\end{align*}
It is clear that for $\l$ large enough  the function $\g_3(\l)$
satisfies an uniform in $\l$ estimate
\begin{equation*}
|\g_3(\l)|\leqslant C|\l|^{-2} \big(|\g_2(\l)|+1\big).
\end{equation*}
One can also check easily that \NB
\begin{equation*}
|\g_2(\l)|\geqslant C|\l|,\quad \l\in \p K,
\end{equation*}
if $n$ is large enough. These two last estimates imply that
$|\g_3(\l)|\leqslant |\g_2(\l)|$ as $\l\in\p K$, if $n$ is large
enough. By Rouch\'e theorem we conclude that for such $n$ the
function $\g_1$ has the same amount of zeros inside $Q$ as the
function $\g_2$ does. Since the zeros of the latter are given by
$\pi n \iu-\la a\ra$, $n\not=0$, this completes the proof.
\end{proof}

\begin{proof}[Proof of Theorem~\ref{thmasympt}]
Assume first that $a\in C^2[0,1]$, $b\in C^1[0,1]$. As was mentioned
above, the eigenvalues of problem (\ref{waveeig1}), (\ref{waveeig2})
are the zeros of the function $\g_0(\l)=0$. It follows from
Lemma~\ref{lm3.1} that these eigenvalues tend to infinity as
$n\to\infty$. By Lemma~\ref{lm2.1}, for $\l$ large enough the
equation $\g_0(\l)=0$ becomes
\begin{equation*}
\E^{2\l+\la \phi_+(\cdot,\l)+\phi_-(\cdot,\l)\ra}=0
\end{equation*}
which may be rewritten as
\begin{equation}\label{3.4}
2\l+\la \phi_+(\cdot,\l)+\phi_-(\cdot,\l)\ra=2\pi n\iu,\quad
n\in \mathbb{Z}.
\end{equation}
If we now replace $\phi_\pm$ by the leading terms of their
asymptotic expansions we obtain
\begin{align}
&2\l+2\la a\ra+\Odr(\l^{-1})=2\pi n\iu,\label{3.4a}
\\
&\l=\pi n \iu-\la a\ra+o(1),\quad n\to\infty.\nonumber
\end{align}
Hence, the eigenvalues behave as $\l\sim \pi n \iu -\la a\ra$
for large $n$. Moreover, it follows from Lemma~\ref{lm3.1} that
it is exactly the eigenvalue $\l_n$ which behaves as
\begin{equation*}%\l%abel{3.5}
\l_n=\pi n \iu-\la  a\ra+o(1),\quad n\to\infty.
\end{equation*}
It follows from this identity and (\ref{3.4a}) that
\begin{equation*}%\l%abel{3.5a}
\l_n=\pi n \iu-\la  a\ra+\Odr(n^{-1}),\quad n\to\infty,
\end{equation*}
and we complete the proof in the case $m=1$. If $m=2$, we substitute
the above identity and (\ref{2.1}) into (\ref{3.4}) and get
\begin{align*}
&\l_n+\la a\ra+\frac{1}{\l_n}\la \phi_1^{(+)}+\phi_1^{(-)}\ra
+\Odr(\l_n^{-2})=\pi n \iu,
\\
&\l_n=\pi n \iu-\la a\ra-\frac{\la \phi_1^{(+)}+\phi_1^{(-)}\ra}{\pi
n \iu}+\Odr(n^{-2}).
%\l%abel{3.6}
\end{align*}
The last formula and the identities (\ref{2.3}) yield
formulas (\ref{1.5}) for $c_0$ and $c_1$. Repeating the described
procedure one can easily check that the asymptotics (\ref{1.4}),
(\ref{1.5}) hold true.
\end{proof}

\begin{proof}[Proof of Corollary~\ref{th1.2}]
The coefficients $c_0$, $c_1$ in the asymptotics (\ref{1.4}) are
determined by the formulas (\ref{1.5}) and, by the Cauchy-Schwarz
inequality, we thus obtain
\begin{equation*}
c_0^2=\la a\ra^2\leqslant \la a^2\ra=2\pi \iu c_1-\la b\ra,
\end{equation*}
with equality if and only if
$a(x)$ is a constant function. This fact completes the proof.
\end{proof}

\begin{proof}[Proof of Corollary~\ref{th1.3}]
It follows from (\ref{1.5}), (\ref{1.6}) that
%\begin{equation*}
$\la a_1^2\ra=\la a_2^2\ra$, $\la a_1^3\ra=\la a_2^3\ra$.
%\end{equation*}
Now we check that
\begin{equation*}
\la a_i^2\ra=\la a_0^2\ra+\la \widetilde{a}_i^2\ra,\quad \la
a_i^3\ra=\la a_0^3\ra+3\la a_0 \widetilde{a}_i^2\ra,\quad i=1,2,
\end{equation*}
and arrive at the statement of the theorem.
\end{proof}

\section{Regularized trace formulas\label{sec5}}

In this section we prove Theorem~\ref{th1.4}. We follow the idea
employed in the proof of the similar trace formula for the
Sturm-Liouville operators in \cite[Ch. I, Sec. 14]{LS}.

We begin by defining the function
\begin{equation*}%\l%abel{4.1}
\Phi(\l):=\l^{2p} \prod\limits_{n=p+1}^\infty
\left(1-\frac{\l}{\l_n}\right)
\left(1-\frac{\l}{\overline{\l}_n}\right).
\end{equation*}
The above product converges, since
\begin{equation*}
\left(1-\frac{\l}{\l_n}\right)
\left(1-\frac{\l}{\overline{\l}_n}\right)=1+\frac{\l^2-2\l\RE\l_n}
{|\l_n|^2},
\end{equation*}
and by Theorem~\ref{thmasympt} we have
\begin{equation}\label{4.0}
\begin{aligned}
&|\l_n|^2=\pi^2 n^2-2\pi\iu c_1+c_0^2+\Odr(n^{-2}),
\\
&\RE\l_n=c_0+\Odr(n^{-2})
\end{aligned}
\end{equation}
as $n\to+\infty$. Proceeding in the same way as in the formulas
(14.8), (14.9) in \cite[Ch. I, Sec. 14]{LS}, we obtain
\begin{align*}
&\Phi(\l)=\frac{C_0\Psi(\l)\sinh\l}{\l},
\\
&\Psi(\l):=\prod\limits_{n=1}^\infty \left(1-\frac{\pi^2
n^2-|\l_n|^2+2\l\RE\l_n}{\pi^2 n^2+\l^2}\right),
\\
&C_0:=(\pi n)^{2p}\prod\limits_{n=p+1}^\infty \frac{\pi^2
n^2}{|\l_n|^2}.
\end{align*}
In what follows we assume that $\l$ is real, positive and
large. In the same way as in \cite[Ch. I, Sec. 14]{LS} it is possible to
derive the formula
\begin{equation}\label{4.2}
\ln\Psi(\l)=-\sum\limits_{k=1}^{\infty}
\frac{1}{k}\sum\limits_{n=1}^{\infty} \left(\frac{\pi^2
n^2-|\l_n|^2+2\l\RE\l_n}{\pi^2 n^2+\l^2}\right)^k.
\end{equation}
Our aim is to study the asymptotic behaviour of $\ln\Psi(\l)$ as
$\l\to+\infty$. Employing the same arguments as in the proof of
Lemma~14.1 and in the equation (14.11) in
\cite[Ch. I, Sec. 14]{LS}, we arrive at the estimate
\begin{align}
& \sum\limits_{n=1}^{\infty} \left( \frac{\pi^2 n^2-|\l_n|^2+2\l
\RE\l_n}{\pi^2 n^2+\l^2}\right)^k\leqslant c^k\l^k
\sum\limits_{n=1}^{\infty} \frac{1}{\left(\pi^2 n^2+\l^2\right)^k}
\nonumber
\\
&\hphantom{\sum\limits_{n=1}^{\infty}} \leqslant c^k \l^k
\int\limits_{0}^{+\infty} \frac{\di t}{(\pi^2
t^2+\l^2)^k}=\frac{c^k}{\l^k}\int\limits_{0}^{+\infty}\frac{\di
z}{(\pi^2 z^2+1)^k}\leqslant \frac{c^{k+1}}{\l^k},\nonumber
\\
&\sum\limits_{k=3}^{\infty} \frac{1}{k}\sum\limits_{n=1}^{\infty}
\left(\frac{\pi^2 n^2-|\l_n|^2+2\l\RE\l_n}{\pi^2
n^2+\l^2}\right)^k=\Odr(\l^{-3}), \quad \l\to+\infty,\label{4.3}
\end{align}
where $c$ is a constant independent of $k$ and $n$. Let us analyze
the asymptotic behaviour of the first two terms in the series
(\ref{4.2}). As $k=1$, we have
\begin{equation}\label{4.4}
\begin{aligned}
&\sum\limits_{n=1}^{\infty} \frac{\pi^2
n^2-|\l_n|^2+2\l\RE\l_n}{\pi^2 n^2+\l^2}=
\sum\limits_{n=1}^{\infty} \frac{\pi^2 n^2-|\l_n|^2-2\pi\iu
c_1+c_0^2}{\pi^2 n^2+\l^2}
\\
&\hphantom{\sum\limits_{n=1}^{\infty}\frac{\pi^2
n^2-|\l_n|^2+2\l}{\pi^2 n^2+\l^2}}+\left(2\pi\iu c_1-c_0^2+2\l
c_0\right)\sum\limits_{n=1}^{\infty}\frac{1}{\pi^2 n^2+\l^2}
\\
&\hphantom{\sum\limits_{n=1}^{\infty}\frac{\pi^2
n^2-|\l_n|^2+2\l}{\pi^2 n^2+\l^2}}+ 2\l^{-1} S -
2\l^{-1}\sum\limits_{n=1}^{\infty} \frac{\pi^2
n^2(\RE\l_n-c_0)}{\pi^2 n^2+\l^2},
\\
&\mbox{where } S:=\sum\limits_{n=1}^{\infty} (\RE\l_n-c_0).
\end{aligned}
\end{equation}
Taking into account (\ref{4.0}), we have %\marginpar{since this
%could be computed explicitly, I removed the estimate with the
%integral}\marginpar{the summand was missing in the series}
\begin{align*}
&\left|\sum\limits_{n=1}^{\infty} \frac{\pi^2
n^2-|\l_n|^2-2\pi\iu c_1+c_0^2}{\pi^2 n^2+\l^2}\right|\leqslant
C\sum\limits_{n=1}^{\infty} \frac{1}{n^2(\pi^2 n^2+\l^2)}
\\
&\hphantom{\sum\limits_{n=1}^{\infty}\frac{\pi^2
n^2-|\l_n|^2}{\pi^2 n^2+\l^2}}=%\leqslant
%\frac{C}{1+\l^2}+C\int\limits_{1}^\infty \frac{\di x} {x^2(\pi^2
%x^2+\l^2)}\leqslant C\l^{-2},
\frac{\ds \pi^2}{\ds 6}\frac{\ds 3+\l^2-3\coth\l}{\ds \l^4}\leqslant C\l^{-2},
\\
&\left|\sum\limits_{n=1}^{\infty} \frac{(\RE\l_n-c_0)\pi^2 n^2
}{\pi^2n^2+\l^2}\right|\leqslant
C\sum\limits_{n=1}^{\infty}\frac{\ds 1}{\ds \pi^2 n^2+\l^2}\leqslant C\l^{-1},
\end{align*}
where the constant $C$ is independent of $\l$. Here we have also
used the formula %\marginpar{wrote $\Odr(\l^{-1}\E^{-2\l})$
%instead of $\Odr(\l^{-1}\E^{-\l})$; I don't think this changes
%anything below.}
\begin{equation}\label{4.5a}
\sum\limits_{n=1}^{\infty} \frac{1}{\pi^2
n^2+\l^2}=\frac{\l\coth\l-1}{2\l^2}=\frac{\l^{-1}-\l^{-2}}{2}+
\Odr(\l^{-1}\E^{-2\l})
\end{equation}
as $\l\to+\infty$. We employ this formula to calculate the
remaining terms in (\ref{4.4}) and arrive at the identity
\begin{equation}\label{4.5}
\sum\limits_{n=1}^{\infty}\frac{\pi^2
n^2-|\l_n|^2+2\l\RE\l_n}{\pi^2n^2+\l^2}=c_0+\left(2S-c_0-
\frac{c_0^2}{2}+\iu\pi c_1\right)\l^{-1}+\Odr(\l^{-2}),
\end{equation}
as $\l\to+\infty$. For $k=2$ we proceed in the similar way,
\begin{align*}
&\sum\limits_{n=1}^{\infty} \left(\frac{\pi^2
n^2-|\l_n|^2+2\l\RE\l_n}{\pi^2 n^2+\l^2}\right)^2=
\sum\limits_{n=1}^{\infty} \frac{(\pi^2
n^2-|\l_n|^2)^2}{(\pi^2n^2+\l^2)^2}
\\
&\hphantom{\sum\limits_{1}^{1}\pi^2 n^2}-
2\l\sum\limits_{n=1}^{\infty}\frac{(\pi^2
n^2-|\l_n|^2)\RE\l_n}{(\pi^2 n^2+\l^2)^2}+ 4\l^2 c_0^2
\sum\limits_{n=1}^{\infty} \frac{1}{(\pi^2 n^2+\l^2)^2}
\\
&\hphantom{\sum\limits_{1}^{1}\pi^2 n^2}+4\l^2
\sum\limits_{n=1}^{\infty} \frac{(\RE\l_n)^2-c_0^2}{(\pi^2
n^2+\l^2)^2}.
\end{align*}
By differentiating (\ref{4.5a}) we obtain
\begin{equation*}
\sum\limits_{n=1}^{\infty}\frac{1}{(\pi^2 n^2+\l^2)^2}=
\frac{\l\coth\l-2-\l^2(1-\coth^2\l)}{4\l^4}.
\end{equation*}
This identity and (\ref{4.0}) yield that as $\l\to+\infty$
\begin{align*}
&\left|\sum\limits_{n=1}^{\infty} \frac{(\pi^2
n^2-|\l_n|^2)^2}{(\pi^2n^2+\l^2)^2}\right|\leqslant
C\sum\limits_{n=1}^{\infty} \frac{1}{(\pi^2n^2+\l^2)^2}\leqslant
C\l^{-3},
\\
&\left|\sum\limits_{n=1}^{\infty} \frac{(\pi^2
n^2-|\l_n|^2)\RE\l_n}{(\pi^2n^2+\l^2)^2}\right|\leqslant
C\sum\limits_{n=1}^{\infty} \frac{1}{(\pi^2n^2+\l^2)^2}\leqslant
C\l^{-3},
\\
&\left|\sum\limits_{n=1}^{\infty}
\frac{(\RE\l_n)^2-c_0^2}{(\pi^2n^2+\l^2)^2}\right|\leqslant
C\sum\limits_{n=1}^{\infty} \frac{1}{(\pi^2n^2+\l^2)^2}\leqslant
C\l^{-3}.
\end{align*}
Hence,
\begin{equation}\label{4.6}
\sum\limits_{n=1}^{\infty} \left(\frac{\pi^2
n^2-|\l_n|^2+2\l\RE\l_n}{\pi^2
n^2+\l^2}\right)^2=c_0^2\l^{-1}+\Odr(\l^{-2})
\end{equation}
as $\l\to+\infty$. It follows from (\ref{4.2}), (\ref{4.3}),
(\ref{4.5}), (\ref{4.6}) that
\begin{align}
&\ln\Psi(\l)=-c_0-(2S-c_0+\iu\pi
c_1)\l^{-1}+\Odr(\l^{-2}),\nonumber
\\
&\Phi(\l)=\frac{C_0\E^{-c_0}\sinh\l}{\l}\big[1-(2S-c_0+\iu\pi
c_1)\l^{-1}+\Odr(\l^{-2})\big]\label{4.7}
\end{align}
as $\l\to+\infty$. It follows from (\ref{3.3}) and
Lemma~\ref{lm2.1} that for $\l$ large enough the estimate %\marginpar{I put the $\l$ inside the ``$|.|$''}
\NB
\begin{equation*}
|\g_0(\l)|\leqslant C|\l|^{-1}\E^{|\l|}
\end{equation*}
holds true. Hence, the order of the entire function $\g_0(\l)$ is
one. In view of Theorem~\ref{thmasympt} we also conclude that the
series $\sum\limits_{n=p+1}^{\infty} |\l_n|^{-2}$ converges and
therefore the genus of the canonical product associated with $\g_0$
is one. We apply Hadamard's theorem (see, for instance, \cite[Ch. I,
Sec. 10, Th. 13]{L}) and obtain that
\begin{equation*}
\g_0(\l)=\E^{P(\l)}%\exp\left(\a_1\l+\a_0+\sum\limits_{n=p+1}^{\infty}
%\frac{\l_n+\l_{-n} }{|\l_n|^2}\right)
\Phi(\l), \quad P(\l)=\a_1\l+\a_0+2\sum\limits_{n=p+1}^{\infty}
|\l_n|^{-2}\RE\l_n,
\end{equation*}
where $\a_1$, $\a_0$ are some numbers. Hence, due to
(\ref{4.7}), it follows that $\g_0$ behaves as
\begin{align*}
&\g_0(\l)=\frac{C_0\E^{P(\l)}\sinh\l}{\l} \big[ 1-(2S-c_0+\iu\pi
c_1)\l^{-1}+\Odr(\l^{-2})\big],
\end{align*}
as $\l\to+\infty$.
%where $\a_0$, $a_1$ are some numbers.
On the other hand, Lemma~\ref{lm2.1} and (\ref{3.3}) imply
that
\begin{equation*}
\g_0(\l)=\frac{\E^{\l+\la a\ra}}{2\l}\big[1+(\la
\phi_1^{(+)}\ra-a(0))\l^{-1}+\Odr(\l^{-2})\big]+\Odr(\l^{-1}\E^{-\l}),
\end{equation*}
as $\l\to+\infty$. Comparing the last two identities yields
$\a_1=0$,
\begin{equation*}
C_0\E^{\a_0-c_0+2\sum\limits_{n=1}^{\infty}
|\l_n|^{-2}\RE\l_n}=\E^{\la a\ra}
\end{equation*}
and
\begin{equation*}
-(2S-c_0+\iu\pi
c_1)=\la\phi_1^{(+)}\ra-a(0).
\end{equation*}
It now follows from (\ref{1.5}), (\ref{2.3}) that
\begin{equation*}
\sum\limits_{\genfrac{}{}{0pt}{}{n=-\infty}{n\not=0}}^{\infty}
(\l_n-c_0)= 2S=c_0+a(0)-\la\phi_1^{(+)}\ra-\iu\pi
c_1=\frac{a(0)+a(1)}{2}-\la a\ra,
\end{equation*}
completing the proof of Theorem~\ref{th1.4}.

\section*{Acknowledgments}
This work was done during the visit of
D.B. to the Universidade de Lisboa; he is grateful for the
hospitality extended to him. P.F. would like
to thank A. Laptev for several conversations of this topic.

\end{document}